\newtheorem{lemma}{Lemma}
\newtheorem{theorem}{Theorem}
\newtheorem{question}{Question}
\newtheorem{corollary}{Corollary}
\theoremstyle{definition}
\newtheorem{definition}{Definition}
\theoremstyle{remark}
\DeclareMathOperator{\Aut}{Aut}
\def\Ker{{\rm Ker}\,}
\def\ZZ{{\mathbb Z}}
\def\BG{{\mathbb G}}
\def\KK{{\mathbb K}}
\def\BG{{\mathbb G}}
\def\BK{{\mathbb K}}
\def\BZ{{\mathbb Z}}
\def\QQ{{\mathbb Q}}
\def\ML{\mathrm{ML}}
\def\HD{\mathrm{HD}}
\def\SAut{\mathrm{SAut}}
\title{Modified Makar-Limanov and Derksen invariants}
\thanks{The paper was supported by RSF grant 22-41-02019.}
\author{Sergey Gaifullin}
\address{Lomonosov Moscow State University, Faculty of Mechanics and Mathematics, Department of Higher Algebra, Leninskie Gory 1, Moscow, 119991 Russia;\linebreak 
Moscow Center for Fundamental and Applied Mathematics, Moscow, Russia; \linebreak 
and \linebreak
National Research University Higher School of Economics, Faculty of Computer Science, Pokrovsky Boulevard 11, Moscow, 109028, Russia}
\email{sgayf@yandex.ru}
\author{Anton Shafarevich}
\address{Moscow Center for Fundamental and Applied Mathematics, Moscow, Russia; \linebreak 
and \linebreak
National Research University Higher School of Economics, Faculty of Computer Science, Pokrovsky Boulevard 11, Moscow, 109028, Russia}
\email{shafarevich.a@gmail.com}
\subjclass[2020]{Primary 14R05, 14R20 ; Secondary 14A05, 13A50}
\keywords{Makar-Limanov invariant, Derksen invariant, locally nilpotent derivation, affine variety}
\begin{document}
\maketitle

\begin{abstract}
    We investigate modified Makar-Limanov and Derksen invariants of an affine algebraic variety. The modified Makar-Limanov invariant is the intersection of kernels of all locally nilpotent derivations with slices and the modified Derksen invariant is the subalgebra generated by these kernels. We prove that modified Makar-Limanov invariant coincide with Makar-Limanov invariant if there exists a locally nilpotent derivation with a slice. Also we construct an example of a variety admitting a locally nilpotent derivation with a slice such that modified Derksen invariant does not coincide with Derksen invariant.
\end{abstract}

\section{Introduction}

Let $\BK$ be an algebraically closed field of characteristic zero. Let $X$ be an affine irreducible algebraic variety over $\BK.$ We denote by $\BK[X]$ the algebra of regular functions on $X$. A linear mapping $\partial\colon \BK[X]\rightarrow \BK[X]$ is called a {\it derivation} if it satisfies the Leibniz rule: $\partial(fg)=f\partial(g)+g\partial(f)$. A derivation $\partial: \BK[X]\rightarrow \BK[X]$ is called \emph{locally nilpotent} (LND) if for each $f \in \BK[X]$ there is $m \in \BZ_{>0}$ such that $\partial^m(f) = 0.$ We denote the set of all locally nilpotent derivations on $X$ by $\mathrm{LND}(X).$ Let $\Aut(X)$ be the group of regular automorphisms of $X.$ Then for each locally nilpotent derivation $\partial$ and $t \in \BK$ there is an automorphism $\mathrm{exp}(t\partial)$ of $\BK[X]$ (and respectively $X$) which is given by the formula

$$\mathrm{exp}(t\partial)(f) = \sum_{i = 0}^{\infty} \frac{t^i\partial^i(f)}{i!}.$$
The map $t \to \mathrm{exp}(t\partial)$ defines a homomorphism $\BG_a \to \Aut(X)$ where $\BG_a = (\BK, +)$ is the additive group of the filed $\BK.$  Denote the image of this homomorphism by $H_\partial$. Such subgroups of $\Aut(X)$ we call $\BG_a$-subgroups.  It is known that every homomorphism $\BG_a \to \Aut(X)$ is obtained this way; see \cite{FR} for details. If $\partial \in \mathrm{LND}(X)$ then the kernel of $\partial$ 
$$\mathrm{Ker}\ \partial = \{f\ |\ \partial(f) = 0\}$$
coincides with the set of regular invariants $\BK[X]^{H_\partial}$ with respect to the natural action of the group $H_\partial\subseteq \mathrm{Aut}(X)$ on $\BK[X].$  The subgroup $\SAut(X)$ in $\Aut(X)$ generated by all $\BG_a$-subgroups in $\Aut(X)$ is called subgroup of {\it special automorphisms} of $X$.

In \cite{ML} Makar-Limanov introduced an invariant $\ML(X)$ of a variety $X$. This invariant is a  subalgebra in $\BK[X]$ equal to the intersection of  kernels of all LNDs on $X$.
$$
\ML(X) = \bigcap_{\partial \in \mathrm{LND}(X)} \mathrm{Ker}\,\partial.
$$
It is easy to see that the Makar-Limanov invariant coincides with the algebra of $\SAut(X)$-invariants $\BK[X]^{\SAut(X)}$. 
The Makar-Limanov invariant allowed to distinguish Koras-Russell cubic $\{x+x^2y+z^2+t^3=0\}$ and $\BK^3$. It became a powerful tool to distinguish nonisomorphic varieties. Also it can be used for investigating automorphism group of a variety, see for example, \cite{ML2,MJ,P,G}. 

In \cite{HD} Derksen introduced an alternative invariant based on LNDs. Derksen invariant is the subalgebra in $\BK[X]$ generated by kernels of all nonzero LNDs.
$$
\HD(X)=\BK\left[\mathrm{Ker}\partial\mid\partial\in \mathrm{LND}(X)\!\setminus\!\{0\}\right].
$$
This invariant also distinguishes the Koras-Russell cubic and the affine space. In \cite{CM} Crachiola and Maubach investigate the question if one of these invariants distinguish varieties better then another. They give examples with trivial one invariant and nontrivial another. 

For some LNDs there exist elements, which are called slices.
\begin{definition}
    Let $\partial$ be a locally nilpotent derivation. We say that a function $s\in \BK[X]$ is a \emph{slice} with respect to an LND $\partial$ if $\partial(s) = 1.$ We denote by $\mathrm{LND}^*(X)$ the set of all locally nilpotent derivations of $\BK[X]$ which have a slice. 
\end{definition}
The well-known Slice theorem says that if $s$ is a slice of $\partial$, then $\BK[X]=(\mathrm{Ker}\,\partial)[s]$.  

In Section~11.9 of the first edition of the book~\cite{FR}, Froudenburg suggested to consider the following modifications of Makar-Limanov and Derksen invariant. In~\cite{DG}, $\ML^*(X)$ is called Makar-Limanov-Freudenburg invariant.

\begin{definition}
    Let $X$ be an affine variety. Suppose that $\mathrm{LND}^*(X) \neq \emptyset$. Then the \emph{modified Makar-Limanov} invariant is the algebra:
    $$\ML^*(X) = \bigcap_{\partial \in \mathrm{LND}^*(X)} \mathrm{Ker}\ \partial \subseteq \BK[X].$$

    The \emph{modified Derksen invariant} of an affine variety $X$ is the subalgebra in $\mathbb{K}[X]$ generated by all kernels of LNDs having slices.
$$\HD^*(X)=\BK[\mathrm{Ker}\,D\mid D\in \mathrm{LND}^*(X)\!\setminus\!\{0\}].$$
\end{definition}
It is easily follows from the definition that $\ML(X)\subseteq \ML^*(X)$ and $\HD^*(X)\subseteq \HD(X)$.

In \cite{DG} characterizations of affine two space and affine three space in terms of $\ML^*(X)$ were obtained. Let $X$ be a $2$-dimensional variety or a $3$-dimensional factorial variety. Then the following conditions are equivalent, see~\cite[Theorems~3.8 and 4.6]{DG}:
\begin{enumerate}
\item $X$ is the affine space,

\item $\ML^*(X)=\BK$,

\item $\ML(X)=\BK$ and $\ML^*(X)\neq \BK[X]$.
\end{enumerate}

In this paper we prove that when $\mathrm{LND}^*(X)\neq\emptyset$, then $\ML^*(X)$ is always equal to $\ML(X)$. Examples, when $\mathrm{LND}^*(X)=\emptyset$, i.e. $\ML^*(X)=\BK[X]$, and $\ML(X)\neq \BK[X]$ are given in \cite{DG}. In partiqular, our result answers the Question~5.9 from \cite{DG}. Let $X$ be the Koras-Russell cubic. In~\cite{D} Dubouloz showed that $\ML(X\times\BK)=\BK$. So, we have $\ML^*(X)=\BK$. This mean that $X\times\BK$ and $\BK^4$ can not be distinguished by $\ML^*$.

Our result means that considering the modified Makar-Limanov invariant we do not obtain new invariant subalgebra of $\BK[X]$. The other situation we have with modified Derksen invariant. We construct an example of such a variety that $\ML(X)=\ML^*(X)=\BK$, $\mathrm{HD}(X)=\BK[X]$, but $\mathrm{HD}^*(X)$ is a proper subalgebra of $\BK[X]$. In our example the variety $X$ is a nonnormal toric variety.

The reason of this situation is that $\ML^*(X)$ can be interpreted as a subalgebra of invariants for some subgroup $\SAut^*(X)\subseteq\Aut(X)$. But using results of the paper~\cite{5A} it can be shown, that generic orbits of this subgroup coincides with generic $\SAut(X)$-orbits and this implies equality of $\ML^*(X)$ and $\ML(X)$. But the Derksen and modified Derksen invariants are not invariants of some group action.

The authors are grateful to Nikhilesh Dasgupta for fruitfull discussions. The first author is a Young Russian Mathematics award winner and would like to thank its sponsors and jury.

\section{Preliminaries}\label{toric}

\subsection{Toric varieties}

In this section we give basic facts about toric varieties. More information about toric varieties one can find in books~\cite{CLSch} and~\cite{Ful}. An irreducible algebraic variety is called {\it toric}, if an algebraic torus $T=(\mathbb{K}^\times)^n$ algebraically acts on it with an open orbit. We can assume the action of $T$ on $X$ to be effective. Note that we do not assume toric variety to be normal. Let $X$ be affine. An affine toric variety $X$ corresponds to a finitely generated monoid $P$ of weights of $T$-semiinvariant regular functions. Let us identify the group of characters $\mathfrak{X}(T)$ with a free abelian group $M=\mathbb{Z}^n$. A vector $m\in\mathbb{Z}^n$ with integer coordinates corresponds to the character~$\chi^m$. Since the open orbit on $X$ is isomorphic to $T$, we have an embedding of algebras of regular functions $\mathbb{K}[X]\hookrightarrow\mathbb{K}[T]$. Identifying the algebra $\mathbb{K}[X]$ with its image we obtain the following subalgebra graded by $P$
$$
\mathbb{K}[X]=\bigoplus_{m\in P}\mathbb{K}\chi^m\subset \bigoplus_{m\in M}\mathbb{K}\chi^m=\mathbb{K}[T].
$$
Let us consider the vector space $M_{\mathbb{Q}}=M\otimes_{\mathbb{Z}}\mathbb{Q}$ over the field of rational numbers. The monoid $P$ generates the cone $\sigma^{\vee}=\mathbb{Q}_{\geq0}P\subset M_{\mathbb{Q}}$. Since $P$ is finitely generated, the cone $\sigma^\vee$ is a finitely generated polyhedral cone. Since the action of~$T$ on $X$ is effective, the cone $\sigma^\vee$ does not belong to any proper subspace of $M_\QQ$. The variety $X$ is normal if and only if the monoid $P$ is {\it saturated}, i.e. $P=M\cap \sigma^{\vee}$. If $P$ is saturated, then the monoid $P_{sat}=M\cap \sigma^{\vee}$ we call the {\it saturation}  of the monoid $P$. Elements of $P_{sat}\setminus P$ we call {\it holes} of $P$. Let us give some definition according to \cite{TY}.

\begin{definition}
An element $p$ of the monoid $P$ is called {\it saturation point} of $P$, if the moved cone $p+\sigma^{\vee}$ has no holes, i.e. $(p+\sigma^{\vee})\cap M\subset P$. 
 
A face $\tau$ of the cone $\sigma^{\vee}$ is called {\it almost  saturated}, if there is a saturation point of  $P$ in $\tau$. 
Otherwise $\tau$ is called a {\it nowhere saturated} face.
\end{definition}

The lattice of one-parameter subgroups of the torus $T$ we denote by $N$. The lattice $N$ is a dual lattice to $M$. There is a natural pairing
$M\times N\rightarrow \mathbb{Z}$, which we denote $\langle \cdot,\cdot\rangle$. This pairing can be extended to a pairing between vector spaces $N_\QQ=N\otimes_\ZZ\QQ$ and $M_\QQ$. In the space $N_\QQ$ we define the cone $\sigma$ dual to $\sigma^\vee$, by the rule
$$
\sigma=\{v\in N_\QQ\mid\forall w\in\sigma^\vee : \langle w,v\rangle\geq 0\}.
$$
The finitely generated polyhedral cone $\sigma$ is {\it pointed}, i.e. it does not contain any nontrivial subspaces.

There is a bijection between $k$-dimensional faces of $\sigma$ and $(n-k)$-dimensional faces of~$\sigma^\vee$. A face $\tau\preccurlyeq\sigma$ corresponds to the face  $\widehat{\tau}=\tau^{\bot}\cap\sigma^\vee\preccurlyeq \sigma^\vee$. Also there is a bijection between $(n-k)$-dimensional faces of $\sigma^\vee$ and $k$-dimensional $T$-orbits on $X$. A face $\widehat{\tau}\preccurlyeq \sigma^\vee$ corresponds to the orbit, which is open in the set of zeros of the ideal 
$$I_{\widehat{\tau}}=\bigoplus_{m\in P\setminus\widehat{\tau}}\KK\chi^m.$$
The composition of these bijections gives a bijection between $k$-demensional faces of the cone $\sigma$ and $k$-dimensional $T$-orbits. The orbit corresponding to a face $\tau$, we denote by~$O_\tau$.

\subsection{Locally nilpotent derivations on toric varieties and Demazure roots}
In this section we skip some proofs. The proofs can be found in books \cite{FR} for facts about LNDs, \cite{Ful} and \cite{CLSch} for facts about toric varieties.

Let $X$ be an affine irreducible variety and $A$ be an abelian group. Consider $A$-grading on $\BK[X]$

$$\BK[X] = \bigoplus_{a \in A}\BK[X]_a, \ \ \ \BK[X]_a\BK[X]_b \subseteq \BK[X]_{a+b}.$$

A derivation $\partial : \BK[X] \rightarrow \BK[X]$ is called {\it $A$-homogeneous of degree $a_0 \in A$} if for all $f \in \BK[X]_a$ we have $\partial(f) \in \BK[X]_{a+a_0}.$

If we have a $\ZZ$-grading, then each LND $\delta$ can be decomposed onto homogeneous derivations (components) $\delta=\delta_l+\ldots+\delta_k$. And $\delta_l$ and $\delta_k$ are LNDs. This implies that if we have $\ZZ^n$-grading, then each LND can be decomposed onto a finite sum of homogeneous derivations. And the derivations with degrees corresponding to vertices of the convex hull of all degrees are LNDs.

Let $X$ be a toric variety. In~\cite{De} all $T$-normalized $\BG_a$-subgroups are discribed. In affine case these subgroups correspond to $M$-homogeneous LNDs. They were discribed in~\cite{L} and correspond to so called Demazure roots. Let us remind this description. 
Consider a finitely generated cone $\sigma$.
Let $$\mathfrak{R}_{\rho}:=\{e \in M \ \ | \ \ \langle e,v_{\rho} \rangle = -1, \langle e,v_{\rho '} \rangle \geq 0 \ \ \forall \rho ' \neq \rho \in \sigma(1)\}.$$
Then the elements of the set $\mathfrak{R} := \bigsqcup\limits_{\rho}\mathfrak{R}_{\rho}$ are called the {\it Demazure roots} of the cone~$\sigma$.
We will call ray $\rho$  the {\it distinguished ray} of the Demazure root $e$ if $e \in \mathfrak{R}_{\rho}$.

For a normal affine toric variety $X$ each Demazure root corresponds to an $M$-homogeneous LND of $\KK[X]$ given by 
\begin{equation}\label{demaz}\partial_e(\chi^m) = \langle p_{\rho}, m \rangle \chi^{e+m}.\end{equation}
Each $M$-homogeneous LND on $X$ has the form $\lambda\partial_e$ for some Demazure root $e$.

In~\cite{BG} all $M$-homogeneous LND on $\BK[X]$ are described. All $M$-homogeneous LND on $\BK[X]$ are $\partial_e$ for some Demazure root $e$, but not all $\partial_e$ give a well defined LND on $\BK[X]$. It occurs, that $\partial_e$ is well defined on $\BK[X]$ if and only if $(P+e)\cap P_{sat}\subseteq P$. The fact that there are no other LNDs on $X$ can be easily obtained for example from \cite[Lemma 6]{BGSh}. This implies the following lemma, see~\cite[Lemma~4]{BG}.

\begin{lemma}\label{eu}
Let $X$ be a toric variety, $\sigma$ be the corresponding cone and $\rho$ be an extremal ray of~$\sigma$. Denote by $O_{\rho}$ the corresponding orbit. Then the following conditions are equivalent:

1) the face $\widehat{\rho}$ of the cone $\sigma^{\vee}$ is almost saturated;

2) there exist a Demazure root $e\in \mathfrak{R}_\rho$ of the cone $\sigma$ such, that the corresponding derivation $\delta_e$ of the algebra $\mathbb{K}[X]$ is well defined;

3) the orbit $O_{\rho}$ consists of smooth points.
\end{lemma}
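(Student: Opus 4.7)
My plan is to prove (1) $\Leftrightarrow$ (2) and (1) $\Leftrightarrow$ (3) separately. For (1) $\Leftrightarrow$ (2) I will use the Demazure-root criterion recalled just before the lemma: $\partial_e$ is a well-defined LND on $\mathbb{K}[X]$ if and only if $(P+e)\cap P_{sat}\subseteq P$. For (1) $\Leftrightarrow$ (3) I will compare $X$ with its normalization $\widetilde{X}=\Spec\mathbb{K}[P_{sat}]$, using the classical fact that in a normal affine toric variety every codimension-one torus orbit consists of smooth points, because the primitive generator $v_\rho$ of any extremal ray extends to a $\mathbb{Z}$-basis of $N$.

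For (1) $\Rightarrow$ (2), given a saturation point $p\in\widehat{\rho}$, I choose $e_0\in M$ with $\langle e_0,v_\rho\rangle=1$ (possible by primitivity of $v_\rho$) and set $e=-e_0+w$ for a lattice vector $w\in\rho^{\bot}\cap M$. Because $\widehat{\rho}$ is full-dimensional in $\rho^{\bot}_{\mathbb{Q}}$, I can take $w$ deep enough in its relative interior so that $\langle e,v_{\rho'}\rangle\geq\langle p,v_{\rho'}\rangle$ for every $\rho'\neq\rho$, which places $e$ in $\mathfrak{R}_\rho$. For any $m\in P$ with $m+e\in P_{sat}$ the vector $m+e-p$ pairs nonnegatively with every $v_{\rho'}$ and with $v_\rho$, hence lies in $\sigma^{\vee}\cap M$, and the saturation property of $p$ gives $m+e=p+(m+e-p)\in P$. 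Conversely, for (2) $\Rightarrow$ (1), finiteness of $\mathbb{K}[P_{sat}]$ over $\mathbb{K}[P]$ makes the $T$-invariant conductor ideal nonzero, so $P$ admits a global saturation point $q_0\in P$ with $q_0+P_{sat}\subseteq P$. Setting $k=\langle q_0,v_\rho\rangle$ and iterating the well-definedness of $\partial_e$, each $q_0+je$ for $0\leq j\leq k$ lies in $P$, so $p:=q_0+ke\in P\cap\widehat{\rho}$; moreover for every $v\in\sigma^{\vee}\cap M$ the element $q_0+v$ lies in $P$, so another $k$ iterations yield $p+v=(q_0+v)+ke\in P$, making $p$ a saturation point in $\widehat{\rho}$.

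For (1) $\Rightarrow$ (3), a saturation point $p\in\widehat{\rho}$ lets me write every $v\in P_{sat}$ as $v=(p+v)-p$, whence $\mathbb{K}[P][\chi^{-p}]=\mathbb{K}[P_{sat}][\chi^{-p}]$; thus the normalization morphism $\widetilde{X}\to X$ restricts to an isomorphism on the open subset $\{\chi^p\neq 0\}\subseteq X$, which contains $O_\rho$ because $p\in\widehat{\rho}$. Since $\widetilde X$ is smooth along $O_\rho$, so is $X$. For (3) $\Rightarrow$ (1), smoothness at a point of $O_\rho$ makes $\mathbb{K}[P]_{I_{\widehat{\rho}}}$ a regular local ring, hence normal, hence equal to its integral closure $\mathbb{K}[P_{sat}]_{I_{\widehat{\rho}}}$. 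Writing $\mathbb{K}[P_{sat}]$ as a finite $\mathbb{K}[P]$-module spanned by monomials $\chi^{h_1},\ldots,\chi^{h_N}$, each identity $\chi^{h_i}=f_i/g_i$ with $g_i\in\mathbb{K}[P]\setminus I_{\widehat{\rho}}$ yields, by comparing monomials in $g_i\chi^{h_i}=f_i$, an element $p_i\in\widehat{\rho}\cap P$ with $p_i+h_i\in P$. Setting $q=p_1+\cdots+p_N\in\widehat{\rho}\cap P$ gives $q+h_i\in P$ for every $i$, and since every $v\in P_{sat}$ has the form $h_i+p$ with $p\in P$, this yields $q+P_{sat}\subseteq P$.

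The main obstacle is the implication (3) $\Rightarrow$ (1): one must extract from the fractional equality $\mathbb{K}[P]_{I_{\widehat{\rho}}}=\mathbb{K}[P_{sat}]_{I_{\widehat{\rho}}}$ a concrete saturation point lying in $\widehat{\rho}$, which relies on finite generation of $\mathbb{K}[P_{sat}]$ as a $\mathbb{K}[P]$-module by monomials together with the observation that summing finitely many elements of $\widehat{\rho}\cap P$ keeps one inside $\widehat{\rho}\cap P$.
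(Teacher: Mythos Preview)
Your argument is correct. Note, however, that the paper does not actually prove this lemma: it merely records the criterion $(P+e)\cap P_{sat}\subseteq P$ for $\partial_e$ to be well defined and then refers the reader to \cite[Lemma~4]{BG} for the statement. So there is no ``paper's own proof'' to compare against; what you have written is a complete self-contained proof that the paper omits.

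A couple of minor comments on presentation. In $(1)\Rightarrow(2)$ you should say explicitly that one may take $w$ to be a lattice point: choose any rational point in $\relint\widehat{\rho}$, clear denominators, and then scale by a large positive integer. In $(3)\Rightarrow(1)$, the passage from smoothness at a closed point of $O_\rho$ to regularity of $\BK[P]_{I_{\widehat{\rho}}}$ uses that the latter is a localization of the local ring at that closed point, and localizations of regular local rings are regular; this is routine but worth a word. Finally, in the last step the claim that every $v\in P_{sat}$ has the form $h_i+p$ with $p\in P$ uses that the $\chi^{h_i}$ generate $\BK[P_{sat}]$ as a $\BK[P]$-module and that both algebras are $M$-graded, so one may compare monomials; you implicitly use the same monomial-comparison trick when extracting $p_i\in\widehat{\rho}\cap P$ from $g_i\chi^{h_i}=f_i$. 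All of this is fine, and your identification of $(3)\Rightarrow(1)$ as the only place requiring real care is accurate.
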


It is easy to see that 
$$\Ker \partial_e=\bigoplus_{m\in P\cap {\rho_i}^\bot} \BK\chi^m.$$

\section{Modified Makar-Limanov invariant}\label{MML}

In this section we prove that if a variety $X$ admits an LND with slice, then $\ML^*(X)=\ML(X)$. To prove this we use some notations and assertions from~\cite{5A}.

\begin{definition}
    We say that a subgroup $G$ in $\Aut(X)$ is \emph{algebraically generated} if it is generated as an abstract group by a family of connected algebraic subgroups of $\Aut(X).$
\end{definition}

Let us remind that we denote by $\mathrm{SAut}(X)$ the subgroup in $\Aut(X)$ generated by all $\BG_a$-subgroups in $\Aut(X)$. If $\mathrm{LND}^*(X) \neq \emptyset$ we denote by $\SAut^*(X)$ the subgroup in $\Aut(X)$ generated by all subgroups of the form $H_\partial$ where $\partial \in \mathrm{LND}^*(X).$ It is easy to see that $\ML(X)=\BK[X]^{\SAut(X)}$ and $\ML^*(X)=\BK[X]^{\SAut^*(X)}$.
We see that $\SAut(X)$ and $\SAut^*(X)$ are algebraically generated groups. By \cite[Proposition 1.3]{5A} orbits in $X$ of algebraically generated group are locally closed.

\begin{definition}
    Let $G \subseteq \SAut(X)$ be a $\BG_a$-generated subgroup and let $\Omega \subseteq X$ be a subset invariant under the $G$-action. We say that locally nilpotent  derivation $\partial$ with associated one-paramter subgroup $H = \mathrm{exp}(\BK \partial)$ satisfies the \emph{orbit separation property} on $\Omega$, if there is an $H$-stable subset $U(H) \subseteq \Omega$ such that
    \begin{enumerate}
        \item for each $G$-orbit $O$ contained in $\Omega, $ the intersection $U(H) \cap O$ is open and dense;
        \item the global $H$-invariants $\BK[X]^H = \mathrm{ker}\ \partial$ separate one-dimensional $H$-orbits in $U(H).$ 
    \end{enumerate}
\end{definition}

\begin{definition}
    Let $X$ be an irreducible affine variety. A set $\mathcal{N}$ of locally nilpotent derivations on $X$ is said to be \emph{saturated} if it satisfies the following two conditions. 
    \begin{enumerate}
        \item $\mathcal{N}$ is closed under conjugation by elements in $G$, where $G$ is the subgroup of $\SAut(X)$ generated by $\mathcal{N}.$
        
        \item $\mathcal{N}$ is closed under taking replicas, i.e. for all $\partial \in \mathcal{N}$ and $f \in \mathrm{ker}\ \partial$ we have $f\partial \in \mathcal{N}.$
    \end{enumerate}
\end{definition}

Let $G \subseteq \SAut(X)$ be $\BG_a$-generated subgroup generated by a saturated set $\mathcal{N}$ of locally nilpotent derivations. Let $\Omega \subseteq X$ be a $G$-stable subset. Suppose that there are locally nilpotent derivations $\partial_1, \ldots, \partial_s \in \mathcal{N}$ satisfy the following two conditions.
\begin{enumerate}
\item The tangent vectors to the orbits of the groups $\mathrm{exp}(\BK\partial_i)$ with $i = 1,\ldots, s$ span $T_x(Gx)$ for every point $x \in \Omega;$
\item $\partial_i$ has the orbit separation property on $\Omega$ for all $i = 1,\ldots,s.$
\end{enumerate}

\begin{lemma}\cite[Corollary 2.12]{5A}
With the notations and assumptions  as above, for every $G$-orbit $O\subseteq \Omega$ of dimension $\geq 2$ and each $x\in O$  the stabilizer $G_x\subseteq G$ of $x$ acts transitively on $O\!\setminus\!\{x\}.$ 
\end{lemma}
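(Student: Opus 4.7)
The plan is to show, for each $y \in O \setminus \{x\}$, that the $G_x$-orbit $G_x \cdot y$ is open in $O$; transitivity then follows because $O \setminus \{x\}$ is irreducible when $\dim O \geq 2$ (removing a point of codimension $\geq 2$ from an irreducible variety preserves irreducibility), so any two nonempty open $G_x$-orbits in $O \setminus \{x\}$ must intersect and hence coincide.

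The pool of elements of $G$ fixing $x$ comes from replicas. If $\partial \in \mathcal{N}$ and $f \in \ker \partial$ satisfies $f(x) = 0$, then by saturation $f\partial \in \mathcal{N}$, and since $f\partial$ vanishes at $x$, the flow $\exp(t \cdot f\partial)$ lies in $G_x$. Its tangent vector at $y$ equals $f(y)\,\partial(y) \in T_y X$. To deduce $T_y(G_x \cdot y) = T_y(O)$ it therefore suffices, for each $i \in \{1,\ldots,s\}$, to produce some $f_i \in \ker \partial_i$ with $f_i(x) = 0$ and $f_i(y) \neq 0$: assumption~(1) ensures that the vectors $\partial_i(y)$ span $T_y(Gy) = T_y(O)$, so the corresponding replicas will span the tangent space of the orbit at $y$.

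To construct each $f_i$, I would invoke the orbit separation property of $\partial_i$. When both $x$ and $y$ lie in $U(H_{\partial_i})$ and in distinct one-dimensional $H_{\partial_i}$-orbits, some function in $\ker \partial_i$ takes different values at them; subtracting its value at $x$ yields a valid $f_i$. The main obstacle is the remaining case, in which $x$ and $y$ happen to share a one-dimensional $H_{\partial_i}$-orbit, or fail to lie in $U(H_{\partial_i})$, for some index $i$. The fix is to first move $y$ by an already-produced element of $G_x$ (built from another $\partial_j$ whose orbits do separate $x$ and $y$) into a position generic for $\partial_i$, and then iterate over $i$. This bootstrapping crucially uses both closures that define saturation: closure under conjugation, in order to transport freshly chosen LNDs by accumulated stabilizer elements, and closure under replicas, in order to kill the new value at $x$.

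Once $T_y(G_x \cdot y) = T_y(O)$ is established, and noting that $G_x \cdot y$ is locally closed (as an orbit of the algebraically generated group $G_x$, by \cite[Proposition~1.3]{5A}), we conclude that $G_x \cdot y$ is open in $O$. Applied to every $y \in O \setminus \{x\}$, the irreducibility argument from the first paragraph then delivers transitivity of the $G_x$-action on $O \setminus \{x\}$.
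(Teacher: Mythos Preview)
The paper does not prove this lemma at all; it is quoted verbatim as \cite[Corollary~2.12]{5A} and used as a black box. Your sketch is essentially the argument given in \cite{5A}: use replicas $f\partial$ with $f(x)=0$ (supplied by the orbit separation property) to see that the stabilizer orbit through any $y$ has full tangent space in $O$, hence is open, and conclude by irreducibility of $O\setminus\{x\}$.

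One imprecision worth flagging: you assert that $G_x$ is algebraically generated in order to invoke \cite[Proposition~1.3]{5A} for local closedness of $G_x\cdot y$. That is not obvious and not needed. The clean fix is to work instead with the subgroup $G'\subseteq G_x$ generated by the one-parameter groups $\exp(\BK\, f\partial)$ you actually construct (and their $G'$-conjugates). This $G'$ is $\BG_a$-generated by design, so its orbits are locally closed; your tangent-space computation shows each $G'$-orbit in $O\setminus\{x\}$ is open, and the irreducibility argument then gives transitivity of $G'$ (hence of $G_x$) on $O\setminus\{x\}$. With that adjustment the argument is complete, and it coincides with the route taken in \cite{5A}.
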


Now we take $G = \SAut(X)$ and $\mathcal{N} = \mathrm{LND}(X).$ Then $\mathcal{N}$ is saturated and by \cite[Corollary 1.21]{5A} there are locally nilpotent derivations $\partial_1, \ldots, \partial_s \in \mathrm{LND}(X)$ which span the tangent space $T_p(Gx)$ at every point $x \in X.$

By \cite[Remark 2.7]{5A} for each $i = 1,\ldots, s$ there is an open dense subset $U_{i}\subseteq X$ which is invariant under the action of the group $H_i = \mathrm{exp}(\BK\partial_i)$ and functions in $\BK[X]^{H_i}$ separate $H_i$-orbits in $U_{i}.$

Let $W = \cap_i U_i$ and $\Omega = \SAut(X)(W).$ Then $\Omega$ is a dense open subset in $X$ and $\partial_i$ has separation property on $\Omega$ for all $i=1,\ldots, s.$ So we obtain the following corollary.

\begin{corollary}\label{cor}
    Let $X$ be an irreducible affine variety. Then there is a dense open $\SAut(X)$-invariant subset $\Omega \subseteq X$ such that for every $\SAut(X)$-orbit $O\subseteq \Omega$ of dimension $\geq 2$ and for each $x \in O$ the stabilizer of $x$ in $\SAut(X)$ acts transitively on $O\!\setminus\!\{x\}.$  
\end{corollary}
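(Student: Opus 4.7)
The plan is to obtain the corollary as a direct application of Lemma~1 to the group $G = \SAut(X)$ with $\mathcal{N} = \mathrm{LND}(X)$. First I would observe that $\mathcal{N}$ is saturated: the conjugate of an LND by an automorphism is again an LND, so closedness under $G$-conjugation is immediate, and the standard fact that $f\partial$ is locally nilpotent whenever $\partial \in \mathrm{LND}(X)$ and $f \in \ker \partial$ gives closedness under replicas.

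Next I would assemble the remaining data required by Lemma~1. By \cite[Corollary 1.21]{5A} there exist $\partial_1,\dots,\partial_s \in \mathrm{LND}(X)$ such that the tangent vectors of the one-parameter subgroups $H_i = \exp(\BK\partial_i)$ span $T_x(Gx)$ at every $x \in X$, which handles the spanning hypothesis. By \cite[Remark 2.7]{5A}, for each $i$ there is an $H_i$-invariant open dense subset $U_i \subseteq X$ on which the global invariants $\ker \partial_i$ separate the $H_i$-orbits. I then set $W = \bigcap_{i=1}^{s} U_i$ and $\Omega = \SAut(X)(W)$: the intersection $W$ is open and dense in $X$, hence so is $\Omega$ (a union of translates of an open set containing $W$), and $\Omega$ is $\SAut(X)$-invariant by construction.

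It then remains to verify that each $\partial_i$ has the orbit separation property on $\Omega$, with $U(H_i) := U_i$. The second condition is immediate from the defining property of $U_i$. For the first, fix a $\SAut(X)$-orbit $O \subseteq \Omega$; by \cite[Proposition 1.3]{5A} the orbit $O$ is locally closed, and since it is the orbit of an algebraically generated group it is irreducible. As $\Omega = \SAut(X)\cdot W$ and $O$ is $\SAut(X)$-invariant, $O$ must meet $W$, so $W \cap O \subseteq U_i \cap O$ is a nonempty open subset of the irreducible variety $O$; irreducibility then forces $U_i \cap O$ to be open and dense in $O$. All hypotheses of Lemma~1 being satisfied, applying it yields exactly the statement of the corollary.

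The only point that requires genuine attention is the verification of the orbit separation property, specifically the density of $U_i \cap O$ in each orbit $O \subseteq \Omega$. The observation that $\SAut(X)$-invariance forces any orbit contained in $\Omega$ to hit the seed set $W$ reduces this to a bookkeeping matter, so I do not anticipate any substantive obstacle; all the real content is already packaged in the cited results of \cite{5A}.
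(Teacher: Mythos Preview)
Your proposal is correct and follows exactly the paper's own argument: take $G=\SAut(X)$, $\mathcal{N}=\mathrm{LND}(X)$, invoke \cite[Corollary~1.21]{5A} for the spanning LNDs, \cite[Remark~2.7]{5A} for the separating opens $U_i$, set $W=\bigcap_i U_i$, $\Omega=\SAut(X)(W)$, and apply Lemma~1. You supply more detail than the paper (the saturation check and the density of $U_i\cap O$ via irreducibility of orbits), but one cosmetic point: strictly speaking the definition asks for $U(H_i)\subseteq\Omega$, so you should take $U(H_i)=U_i\cap\Omega$ rather than $U_i$; this changes nothing in the argument.
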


\begin{lemma}\label{lemma1}
    Let $X$ be an affine irreducible variety. Suppose that $\mathrm{LND}^*(X) \neq \emptyset.$ Then there is an open dense $\mathrm{SAut}(X)$-invariant subset $\Omega'$ such that for each $x\in \Omega'$ the orbit of $x$ with respect to $\SAut(X)$ coincides with the orbit of $x$ with respect to $\SAut^*(X).$
\end{lemma}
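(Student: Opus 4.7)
The plan is to combine Corollary~\ref{cor} with the observation that $\SAut^*(X)$ is normal in $\SAut(X)$. Normality follows because, for any $\partial\in\mathrm{LND}^*(X)$ with slice $s$ and any $g\in\Aut(X)$, the conjugate $g\partial g^{-1}$ is locally nilpotent with slice $g(s)$; hence $gH_\partial g^{-1}=H_{g\partial g^{-1}}\subseteq\SAut^*(X)$. As a consequence, the common fixed-point set $F=\bigcap_{\partial\in\mathrm{LND}^*(X)}\mathrm{Fix}(H_\partial)$ is $\SAut(X)$-invariant, and since it is contained in the fixed locus of any single nonzero $\partial\in\mathrm{LND}^*(X)$ it is a proper closed subvariety of $X$. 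I then set $\Omega'=\Omega\setminus F$, where $\Omega$ is the open dense $\SAut(X)$-invariant subset provided by Corollary~\ref{cor}. By construction $\Omega'$ is open, dense (by irreducibility of $X$) and $\SAut(X)$-invariant; moreover $F$ being $\SAut(X)$-invariant implies that every $\SAut(X)$-orbit $O$ meeting $\Omega'$ lies entirely in $\Omega'$.

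Fix $x\in\Omega'$ and put $O=\SAut(X)\cdot x$. Since $x\notin F$, some $\partial\in\mathrm{LND}^*(X)$ moves $x$, so $H_\partial\cdot x$ is one-dimensional. When $\dim O\geq 2$, Corollary~\ref{cor} gives that $\SAut(X)_x$ acts transitively on $O\setminus\{x\}$. Choose any $y\in H_\partial\cdot x\setminus\{x\}$. For every $z\in O\setminus\{x\}$ there is $g\in\SAut(X)_x$ with $g\cdot y=z$, and using $g\cdot x=x$ we obtain
$$z=g\cdot y\in g\cdot(H_\partial\cdot x)=(gH_\partial g^{-1})\cdot x\subseteq\SAut^*(X)\cdot x.$$
Combined with $x\in\SAut^*(X)\cdot x$, this yields $O=\SAut^*(X)\cdot x$.

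The case $\dim O=1$ requires a different argument, since Corollary~\ref{cor} is silent in this range. Here I would use that orbits of algebraically generated groups are irreducible: $H_\partial\cdot x$ is an irreducible constructible one-dimensional subset of the one-dimensional irreducible variety $O$, hence open and dense in $O$, and consequently so is $\SAut^*(X)\cdot x$. Given any other $y\in O$, write $y=g\cdot x$ for some $g\in\SAut(X)$; normality of $\SAut^*(X)$ yields $\SAut^*(X)\cdot y=g\cdot\SAut^*(X)\cdot x$, which is again open and dense in $O$. Two open dense subsets of the irreducible variety $O$ must meet, and since distinct $\SAut^*(X)$-orbits are disjoint, we conclude $\SAut^*(X)\cdot y=\SAut^*(X)\cdot x$, in particular $y\in\SAut^*(X)\cdot x$. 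Thus $O=\SAut^*(X)\cdot x$ in this case as well.

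The step I expect to be the main obstacle is the one-dimensional case: Corollary~\ref{cor} provides no orbit-separation information in dimension one, so the conclusion must be extracted from normality of $\SAut^*(X)$ in $\SAut(X)$ together with irreducibility of $\SAut(X)$-orbits, using these two inputs to rule out the a priori possibility that the one-dimensional orbit $O$ splits into several smaller $\SAut^*(X)$-orbits.
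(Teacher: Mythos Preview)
Your proof is correct and follows the paper's approach: both arguments hinge on the normality of $\SAut^*(X)$ in $\SAut(X)$ (which the paper uses implicitly when asserting that the fixed locus $Z$ is $\Aut(X)$-invariant and that $\psi\varphi\psi^{-1}\in\SAut^*(X)$), and then apply Corollary~\ref{cor} for orbits of dimension $\geq 2$ in exactly the way you do. For one-dimensional orbits the paper dispatches the case in a single line, asserting that both orbits are closed and hence coincide; your open-dense-intersection argument via normality is a more detailed justification of the same conclusion, so what you flagged as the main obstacle is in fact handled more carefully in your write-up than in the paper itself.
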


\begin{proof}
Let $Z$ be the set of fixed points in $X$ with respect to $\SAut^*(X).$ Then $Z$ is a closed subset in $X$ and $Z\neq X.$ Note that $Z$ is invariant with respect to all automorphisms of $X$. So $X\!\setminus\!Z$ is an open dense $\SAut(X)$-invariant subset.

Let $\Omega$ be the set from Corollary \ref{cor}. Then $\Omega' = \Omega \cap (X\!\setminus\!Z)$ is a $\SAut(X)$-invariant open dense subset in $X$. Consider a point $x\in \Omega'.$ If $\SAut(X)$-orbit of $x$ is one-dimensional then $\SAut^*(X)$-orbit of $x$ is one-dimensional. They both closed in $X$ so they coincdes.

Suppose that the dimension of $\SAut(X)$-orbit of $x$ is greater then 1. We denote this orbit by $O.$ There is an automorphism $\varphi \in \SAut^*(X)$ such that $y = \varphi(x) \neq x.$ By Corollary \ref{cor} for each $z \neq x \in O$ there is an automorphism $\psi \in \SAut(X)$ such that $\psi(x) = x$ and $\psi(y) = z.$

Then $\psi\varphi\psi^{-1} \in \SAut^*(X)$ and $\psi\varphi\psi^{-1}(x) = z.$ So $\SAut^*(X)$ acts transitevly on $O$ and $\SAut^*(X)$-orbit of $x$ is $O.$
\end{proof}

\begin{theorem}
Let $X$ be an affine irreducible variety and suppose that $\mathrm{LND}^*(X) \neq \emptyset.$ Then $\ML(X) = \ML^*(X).$
\end{theorem}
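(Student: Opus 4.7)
The plan is to derive the theorem directly from Lemma~\ref{lemma1}, which is where all the work has been done. Since $\mathrm{LND}^*(X)\subseteq \mathrm{LND}(X)$, the inclusion $\ML(X)\subseteq \ML^*(X)$ is automatic, so the whole task is the reverse inclusion. Using the identifications $\ML(X)=\BK[X]^{\SAut(X)}$ and $\ML^*(X)=\BK[X]^{\SAut^*(X)}$, what has to be shown is that every $\SAut^*(X)$-invariant regular function is automatically $\SAut(X)$-invariant.

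First I would fix an $f\in \ML^*(X)$ and an arbitrary $\varphi\in\SAut(X)$, and consider the regular function $g=f-f\circ\varphi\in\BK[X]$. The aim is to show $g=0$. For this I would apply Lemma~\ref{lemma1} to get an open dense $\SAut(X)$-invariant subset $\Omega'\subseteq X$ on which every $\SAut(X)$-orbit equals the corresponding $\SAut^*(X)$-orbit. Then for any $x\in\Omega'$, the point $\varphi(x)$ lies in the $\SAut(X)$-orbit of $x$, hence in the $\SAut^*(X)$-orbit of $x$, so $\varphi(x)=\psi(x)$ for some $\psi\in\SAut^*(X)$. Because $f$ is $\SAut^*(X)$-invariant this gives $f(\varphi(x))=f(\psi(x))=f(x)$, i.e.\ $g$ vanishes on $\Omega'$.

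Since $X$ is irreducible and $\Omega'$ is dense, $g\equiv 0$ on $X$, so $f\circ\varphi=f$. As $\varphi\in\SAut(X)$ was arbitrary, $f\in\BK[X]^{\SAut(X)}=\ML(X)$, completing the proof.

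The main obstacle is really bundled into Lemma~\ref{lemma1} (and, upstream of it, Corollary~\ref{cor} together with the orbit-transitivity result from \cite{5A}): the nontrivial input is that generic $\SAut(X)$-orbits of dimension $\geq 2$ are swept out by a single stabilizer, which is what allows us to upgrade the existence of one nontrivial $\SAut^*(X)$-motion out of $x$ to transitivity of $\SAut^*(X)$ on the whole $\SAut(X)$-orbit of $x$. Given that machinery, the theorem itself is a short density argument, so no further technical step is needed here.
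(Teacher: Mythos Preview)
Your proof is correct and follows essentially the same route as the paper: both reduce to Lemma~\ref{lemma1} and then use that a regular function vanishing on a dense open subset of an irreducible variety vanishes everywhere. The only cosmetic difference is that the paper argues by contradiction (picking $f\in\ML^*(X)\setminus\ML(X)$ and a $\partial$ with $\partial(f)\neq 0$ to locate a point where $f$ is non-constant on its $\SAut(X)$-orbit), whereas you argue directly by showing $f-f\circ\varphi$ vanishes on $\Omega'$; the substance is the same.
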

\begin{proof}

Since $\mathrm{LND}^*(X)$ is non-empty we have $\ML(X) \subseteq \ML^*(X).$ Suppose that $\ML(X) \subsetneq\ML^*(X).$ Let $f \in \ML^*(X)\!\setminus\!\ML(X).$ Then there is $\partial \in \mathrm{LND}(X)$ such that $\partial(f) \neq 0.$ The set $$X_{\partial(f)} = \{x\in X | \partial(f)(x) \neq 0 \}$$
is an open dense subset in $X$ and $f$ is a non-constant function on $\SAut(X)$-orbit of each point in $X_{\partial(f)}$. But $f$ is $\SAut^*(X)$-invariant function. If we take $x \in X_{\partial(f)} \cap \Omega'$ where $\Omega'$ is an open dense subset from Lemma \ref{lemma1} then $f$ is a constant function on $\SAut^*(X)$-orbit of $x$ and non-constant function on $\SAut(X)$-orbit of $x.$ But this orbits coincides and we obtain a contradiction.   
\end{proof}

\section{Modified Derksen invariant}\label{MD}

In this section we provide an example of a variety $X$ with $\HD(X)\neq \HD^*(X)$. This variety is a nonnormal toric variety.

Let $P$ is the monoid obtained from $\mathbb{Z}_{\geq 0}^3$ by removing two vertical rays 
$$
P=\mathbb{Z}_{\geq 0}^3\setminus \left(\{(1,0,n)\mid n\in \mathbb{Z}_{\geq 0}\}\cup\{(0,1,m)\mid m\in \mathbb{Z}_{\geq 0}\}\right).
$$
The corresponding cone is $\sigma^\vee=\QQ_{\geq 0}^3$. So, $\sigma=\QQ_{\geq 0}^3$.
Denote by $X$ the corresponding nonnormal toric variety. It is easy to see that $$\BK[X]=\BK[x^2,xy, y^2, x^3,x^2y,xy^2,y^3,z].$$ 

\begin{theorem}
\rm{i)} $\ML(X)=\BK$;

\rm{ii)} $\ML^*(X)=\BK$;

\rm{iii)} $\HD(X)=\BK[X]$;

\rm{iv)} $\HD^*(X)\neq \BK[X]$.
\end{theorem}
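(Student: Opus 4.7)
For parts (i) and (iii) the plan is to use three $M$-homogeneous LNDs coming from Demazure roots of $\sigma=\mathbb{Q}_{\geq 0}^3$. Checking the criterion $(P+e)\cap P_{sat}\subseteq P$, the roots $e_1=(-1,1,0)$, $e_2=(1,-1,0)$, and $e_3=(0,0,-1)$ all yield well-defined LNDs on $\BK[X]$, with kernels $\BK[y^2,y^3,z]$, $\BK[x^2,x^3,z]$, and $\BK[x^2,xy,y^2,x^3,x^2y,xy^2,y^3]$ respectively (these are the $M$-graded subalgebras indexed by $P\cap\rho_i^\perp$). Their $M$-graded intersection is $\BK$, giving (i); their union already contains every monomial generator of $\BK[X]$, giving (iii). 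For (ii), the equality $\partial_{e_3}(z)=1$ says $z$ is a slice of $\partial_{e_3}$, so $\mathrm{LND}^*(X)\neq\emptyset$ and the main theorem of Section~\ref{MML} forces $\ML^*(X)=\ML(X)=\BK$.

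The heart of the argument is (iv). The plan is to locate a canonical subvariety of $X$ that is invariant under every automorphism and whose equivariance constrains kernels of LNDs with slices. The key structural input is the product decomposition $X=Y\times\BA^1$ coming from the splitting $P=Q\times\ZZ_{\geq 0}$ with $Q=\ZZ_{\geq 0}^2\setminus\{(1,0),(0,1)\}$; here $Y=\Spec\BK[Q]$ has a unique singular point at the torus-fixed point, as a quick computation at the origin of $Y$ gives $\dim_\BK\mathfrak{m}/\mathfrak{m}^2=7>2=\dim Y$. Consequently the singular locus of $X$ is the affine line $\Sigma\cong\BA^1$ with coordinate $z$, cut out by the ideal $I_\Sigma=(x^2,xy,y^2,x^3,x^2y,xy^2,y^3)\subseteq\BK[X]$.

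Being the singular locus, $\Sigma$ is preserved by every element of $\Aut(X)$, so every LND preserves $I_\Sigma$ and descends to an LND $\bar\delta$ on $\BK[\Sigma]=\BK[z]$. If $\delta\in\mathrm{LND}^*(X)$ has slice $s$, then $\bar s$ is a slice for $\bar\delta$, so $\bar\delta\neq 0$; but any nonzero LND on $\BK[z]$ must have the form $c\cdot\partial/\partial z$ with $c\in\BK^\times$, and its kernel is $\BK$. Equivariance of the restriction $\BK[X]\to\BK[z]$ then forces $\Ker\delta\subseteq\BK+I_\Sigma$, and since $\BK+I_\Sigma$ is a subalgebra of $\BK[X]$ (because $I_\Sigma$ is an ideal) we obtain $\HD^*(X)\subseteq\BK+I_\Sigma$. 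Since the image of $z$ in $\BK[\Sigma]$ is non-constant, $z\notin\BK+I_\Sigma$, hence $\HD^*(X)\neq\BK[X]$.

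The main obstacle I anticipate is the identification of $\Sigma$ and the verification that it is preserved by every $\BG_a$-subgroup; once that is in place, the observation that a slice for $\delta$ descends to a slice for $\bar\delta$ pins down $\bar\delta$ up to a nonzero scalar and the rest is routine. The Demazure-root checks in parts (i) and (iii) and the fact that $\BK+I_\Sigma$ is closed under multiplication should be straightforward.
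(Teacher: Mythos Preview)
Your arguments for (i)--(iii) match the paper's up to relabelling of the Demazure roots; for (ii) you invoke the main theorem of Section~\ref{MML} instead of the paper's direct check that $\partial_{(0,0,-1)}$, $\partial_{(0,0,-1)}+\partial_{(-1,1,0)}$ and $\partial_{(0,0,-1)}+\partial_{(1,-1,0)}$ all lie in $\mathrm{LND}^*(X)$ with common slice $z$, which is of course legitimate.

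For (iv) you take a genuinely different and correct route. The paper puts the $\ZZ$-grading $\deg x^ay^bz^c=a+b+c$ on $\BK[X]$, shows that the existence of a slice forces the lowest homogeneous component of $\delta$ to have degree $-1$, observes that $(0,0,-1)$ is the only Demazure root of $\ZZ$-degree $-1$ whose LND is defined on $\BK[X]$, and then deduces that the coefficient of $z$ vanishes in every element of $\Ker\delta$. You instead exploit the product splitting $X=Y\times\BA^1$ to identify $X^{\mathrm{sing}}=\Sigma\cong\BA^1$, use its $\Aut(X)$-invariance to restrict any LND to $\BK[\Sigma]=\BK[z]$, and note that a slice descends to a slice, forcing the restriction to be nonzero with kernel $\BK$; hence $\Ker\delta\subseteq\BK+I_\Sigma$. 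Your conclusion is actually stronger (all powers $z^k$ with $k\geq 1$ are excluded, not just $z$), the argument is more geometric, and it would transplant immediately to any variety whose singular locus is an affine line; the paper's grading argument, by contrast, stays purely combinatorial and avoids computing $X^{\mathrm{sing}}$. One small gap to fill: your tangent-space computation shows the origin of $Y$ is singular but not that the remaining points are smooth; this follows because the normalisation $\BA^2\to Y$ has conductor equal to the maximal ideal at the origin and is therefore an isomorphism over $Y\setminus\{0\}$ (or use Lemma~\ref{eu} for the one-dimensional torus orbits).
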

\begin{proof}
Note, that (i) follows from (ii). Also it follows from the fact that $X$ is flexible by \cite[Theorem~1]{BG}. But let us prove it directly. Let us consider three LNDs corresponding to Demazure roots $e_1=(0,0,-1)$, $e_2=(-1,1,0)$ and $e_3=(1,-1,0)$. In coordinates $\partial_{e_1}=\frac{\partial}{\partial z}$, $\partial_{e_2}=y\frac{\partial}{\partial x}$ and $\partial_{e_3}=x\frac{\partial}{\partial y}$. One can see that its kernels are subalgebras 
$$A_i=\bigoplus_{m\in P\cap \pi_i}\BK\chi^m,$$
where $\pi_1$, $\pi_2$ and $\pi_3$ are coordinate planes. We have $\ML(X)\subseteq A_1\cap A_2\cap A_3=\BK$. Hence, $\ML(X)=\BK$.

(ii) It is proved in \cite{B} that for toric varieties $\ML^*(X)=\ML(X)$. This gives the goal. But let us again give a direct proof (according to ideas from \cite{B}). Let us consider three derivations $\partial_1$, $\partial_1+\partial_2$ and $\partial_1+\partial_3$. It is easy to see that all of them are LNDs. Also it is obvious that $\partial_1(z)=\partial_2(z)=\partial_3(z)=1$. Therefore, $\partial_1, \partial_1+\partial_2, \partial_1+\partial_3\in\mathrm{LND}^*(X)$. So, we have 
$$
\ML^*(X)\subseteq \Ker\partial_1\cap\Ker(\partial_1+\partial_2)\cap\Ker(\partial_1+\partial_3)=\Ker\partial_1\cap\Ker\partial_2\cap\Ker\partial_3=\BK.
$$
That is $\ML^*(X)=\BK$.

(iii). To prove (iii) it is sufficient to note that $\BK[X]=(\Ker \partial_1)[z]$ and $z$ is in $\Ker\partial_2$. Therefore, $\BK[X]\subseteq \BK[\Ker\partial_1,\Ker\partial_2]\subseteq \HD(X)$. Therefore, $\HD(X)=\BK[X]$.

(iv) Let $\delta$ be an LND with a slice on $X$. Let us prove that any polynomial $f\in\Ker\delta$ contains the monomial $z$ with zero coefficient. If we prove this, then each polynomial in $\HD^*(X)$ has zero coefficient at $z$. Therefore, $\HD^*(X)\neq\BK[X]=\HD(X)$. 

Let us consider the following $\ZZ$-grading on $\KK[X]$: $\deg x^ay^bz^c=a+b+c$. Let us consider the decomposition of $\delta$ onto the homogeneous components: $\delta=\delta_l+\delta_{l+1}+\ldots+\delta_k$. Then $\delta_l$ is an LND. Let us consider the decomposition of $\delta_l$ onto $M$-homogeneous components. Among these components there is an $M$-homogeneous LND, that corresponds to a Demazure root. But all these $M$-homogeneous components has $\ZZ$-degree $l$. Therefore, $l$ is the $\ZZ$-degree of a Demazure root of $\sigma$. But all Demazure roots of $\sigma$ has one coordinate equals $-1$ and nonnegative other coordinates. Therefore, $\ZZ$-degrees of all Demazure roots are not less then $-1$. Hence, $l\geq -1$.

 Let $s$ be a slice of $\delta$. We can assume that $s$ has zero free term. Indeed, if $\delta(s)=1$, then $\delta(s-\lambda)=1$ for every $\lambda\in\BK$. 
Then the decomposition of $s$ onto the homogeneous components with respect to $\BZ$-grading has the following form: $s=s_t+s_2+\ldots+s_m$, where $t\geq 1$. We have
$$
1=\delta(s)=\left(\sum_{i=l}^k\delta_i\right)\left(\sum_{j=t}^m s_j\right)=\left(\sum_{i=l}^k\sum_{j=t}^m \delta_i(s_j)\right).
$$
The degree of each $\delta_i(s_j)$ is $i+j\geq l+t\geq l+1$. Since $\deg 1=0$, we obtain $l=-1$. 

Note that there are only 3 Demazure roots of $\sigma$ with $\ZZ$-degree equal $-1$. That are $e_1=(0,0,-1)$, $u=(0,-1,0)$ and $v=(0,0,-1)$. But for $u$ and $v$ we have $(P+u)\cap \sigma$ and $(P+v)\cap \sigma$ are not subsets of P. That is there are no $M$-homogeneous LNDs of $X$ with degrees $u$ and $v$. This implies $\delta_l=\mu \partial_{e_1}$ for some $\mu\in \BK\setminus\{0\}$. 

Suppose there exists a polynomial $f\in \Ker\delta$ with a nonzero coefficient at $z$. Let $f = f_0 + \ldots +f_r$ be the decomposition onto homogeneous components of $f$ with respect to $\BZ$-grading. Changing $f$ to $f-\lambda$ we can assume that $f_0=0$. Note that $x,y\notin \BK[X]$, hence, $f_1=cz$, $c\neq 0\in \BK$. Then we have
$$
0=\delta(f)=(\mu \partial_{e_1}+\delta_0+\ldots+\delta_k)(cz+f_2+\ldots+f_r)=c+\sum_{i+j>0}\delta_i(f_j).
$$
We obtain a contradiction. Therefore, all $f\in \Ker\delta$ have zero coefficient at $z$. This finishes the proof of the theorem.
\end{proof}

Since $P$ is not saturated, $X$ is not normal. We do not know any examples of normal varieties admitting LND with a slice with different Derksen and modified Derksen invariants. So, we finish stating the question.
\begin{question}
Is there an example of a normal affine irreducible variety $X$ with conditions $\mathrm{LND}^*(X)\neq\emptyset$ and $\HD^*(X)\neq \HD(X)$?
\end{question}

\end{document}